\theoremstyle{plain}
\newtheorem{thm}{Theorem}[section]
\newtheorem{lem}[thm]{Lemma}
\newtheorem{prop}[thm]{Proposition}
\theoremstyle{definition}
\newtheorem*{defn}{Definition}
\newtheorem*{ex}{Example}
\theoremstyle{remark}
\newtheorem*{rem}{Remark}
\newtheorem*{ack}{Acknowledgement}
\title[C$^*$-algebras generated by composition operators]
{C$^*$-algebras generated by multiplication operators and composition operators by functions with self-similar branches}
\author{Hiroyasu Hamada}
\address{National Institute of Technology, Sasebo College, 
Okishin, Sasebo, Nagasaki, 857-1193, Japan.}
\email{h-hamada@sasebo.ac.jp}
\keywords{composition operator, multiplication operator,
C$^*$-algebra, self-similar}
\subjclass[2010]{Primary 46L55, 47B33; Secondary 28A80, 46L08}
\begin{document}

\begin{abstract}
Let $K$ be a compact metric space and let $\varphi: K \to K$ be continuous.
We study C$^*$-algebra $\mathcal{MC}_\varphi$
generated by
all multiplication operators by continuous functions on $K$
and a composition operator $C_\varphi$ induced by $\varphi$ on
a certain $L^2$ space.
Let $\gamma = (\gamma_1, \dots, \gamma_n)$ be a system of proper contractions
on $K$. Suppose that $\gamma_1, \dots, \gamma_n$ are inverse branches of
$\varphi$ and $K$ is self-similar.
We consider the Hutchinson
measure $\mu^H$ of $\gamma$ and the $L^2$ space $L^2(K, \mu^H)$.
Then we show
that the C$^*$-algebra $\mathcal{MC}_\varphi$ is isomorphic to
the C$^*$-algebra $\mathcal{O}_\gamma (K)$ associated
with $\gamma$ under some conditions.
\end{abstract}

\maketitle

\section{Introduction}

Recently several authors considered C$^*$-algebras generated by
composition operators (and Toeplitz operators) to study properties of
composition operators or algebraic combinations of composition operators
and Toeplitz operators. Most of their studies have focused
on composition operators induced by linear fractional maps (\cite{J1, J2, KMM1, KMM3, KMM2, Pa, Q, Q2, SA}).
Watatani and the author \cite{HW}, and the author \cite{H1} considered 
C$^*$-algebras generated by
composition operators and Toeplitz operators for finite Blaschke products,
which are certain products of linear fractional maps.
Let $R$ be a finite Blaschke product of degree at least two. In \cite{H1}, 
we proved that there is a relation between
a C$^*$-algbara generated by a composition operator $C_R$
and Toeplitz operators and the C$^*$-algebra $\mathcal{O}_R(J_R)$ associated
with the complex dynamical system introduced in \cite{KW1}.

On the other hand, there are some studies on C$^*$-algebras generated by
composition operators on $L^2$ spaces, for example \cite{M} and \cite{H2}.
Matsumoto \cite{M} introduced some
C$^*$-algebras associated with cellular automata generated by composition
operators and multiplication operators.
Let $R$ be a rational function of degree at least two,
let $J_R$ be the Julia set of $R$ and let $\mu^L$ be the Lyubich measure
of $R$. In \cite{H2}, we studied the C$^*$-algebra $\mathcal{MC}_R$
generated by
all multiplication operators by continuous functions in $C(J_R)$
and the composition operator $C_R$ induced by $R$
on $L^2(J_R, \mu^L)$.
We showed that the C$^*$-algebra $\mathcal{MC}_R$ is isomorphic to
the C$^*$-algebra $\mathcal{O}_R (J_R)$ associated with the complex dynamical
system $\{R^{\circ n} \}_{n=1} ^\infty$.

More generally, we consider a C$^*$-algebra generated by
all multiplication operators
by continuous functions and a composition operator $C_\varphi$ induced by
$\varphi$ on a certain $L^2$ space.
Let $\varphi$ be the map $\varphi: [0,1] \to [0,1]$ 
defined by
\[
  \varphi (x) = \begin{cases}
                 2x & 0 \leq x \leq \frac{1}{2}, \\
                 -2x + 2 & \frac{1}{2} \leq x \leq 1.
                \end{cases}
\]
The map $\varphi$ is famous in dynamical system and is called the tent map.
Since $\varphi$ is not a rational map, we cannot adapt the theory of $\mathcal{MC}_R$ and $\mathcal{O}_R (J_R)$ in \cite{H2}.
In the same way as rational functions, we consider suitable C$^*$-algebras.
Kajiwara and Watatani \cite{KW2} also introduced the C$^*$-algebra
$\mathcal{O}_\gamma (K)$ associated with self-similar maps $\gamma$,
which is defined in a similar way to the C$^*$-algebra
$\mathcal{O}_R (J_R)$ associated with the complex dynamical
system $\{R^{\circ n} \}_{n=1} ^\infty$.

In this paper we consider the case that $\varphi$ is in a certain class
containing the tent map.
Let $(K, d)$ be a compact metric space, 
let $\gamma = (\gamma_1, \dots, \gamma_n)$ be a system of proper contractions
on $K$ and let $\varphi: K \to K$ be continuous.
Suppose that $\gamma_1, \dots, \gamma_n$ are inverse branches of
$\varphi$ and  $K$ is self-similar. We consider the Borel $\sigma$-algebra
$\mathcal{B}(K)$ on $K$ and the Hutchinson measure $\mu^H$ of $\gamma$.
The measure $\mu^H$ is the invariant measure of $\gamma$.
Let us denote by $\mathcal{MC}_\varphi$ the C$^*$-algebra generated by
multiplication operators $M_a$ for $a \in C(K)$
and the composition operator $C_\varphi$ on
$L^2 (K, \mathcal{B}(K), \mu^H)$.
Assume that the system
$\gamma = (\gamma_1, \dots, \gamma_n)$ satisfies the open set condition,
the finite branch condition and the measure separation condition in $K$. 
Then $\mathcal{MC}_\varphi$ is isomorphic to $\mathcal{O}_\gamma (K)$
associated with $\gamma$.

This means that we construct a representation of the C$^*$-algebra
$\mathcal{O}_\gamma (K)$ by multiplication operators composition
operators. We expect to be applied this result to analysis of the
C$^*$-algebra $\mathcal{O}_\gamma (K)$.

There are some remarks for $\mathcal{MC}_\varphi$.
We denote by $\mathcal{MC}_{\gamma_1, \gamma_2, \dots, \gamma_n}$
the C$^*$-algebra generated by all multiplication operators by continuous
functions and composition operators
$C_{\gamma_i}$ by $\gamma_i$ for $i = 1, 2, \dots, n$
on $L^2 (K, \mathcal{B}(K), \mu^H)$.
The definition of $\mathcal{MC}_\varphi$ is different from
that of $\mathcal{MC}_{\gamma_1, \gamma_2, \dots, \gamma_n}$.
Under some condition, we can show
$C_\varphi^* = \frac{1}{n} \sum_{i = 1} ^n C_{\gamma_i}$.
Thus $\mathcal{MC}_\varphi \subset \mathcal{MC}_{\gamma_1, \gamma_2, \dots, \gamma_n}$. Although $\mathcal{MC}_\varphi$ is not equal to $\mathcal{MC}_{\gamma_1, \gamma_2, \dots, \gamma_n}$ in general. For example, let $\gamma_1$ and $\gamma_2$ be the inverse branches of the tent map $\varphi$. Then $\mathcal{MC}_\varphi$ is isomorphic to the Cuntz algebra $\mathcal{O}_\infty$, while $\mathcal{MC}_{\gamma_1, \gamma_2}$ is isomorphic to the Cuntz algebra $\mathcal{O}_2$. Thus
 $\mathcal{MC}_\varphi$ is not equal to $\mathcal{MC}_{\gamma_1, \gamma_2}$.

\section{Covariant relations}

Let $(K,d)$ be a compact metric space. A continuous map $\gamma: K \to K$
is called a {\it proper contraction} if there exists constants
$0 < c_1 \leq c_2 < 1$ such that
\[
   c_1 d(x, y) \leq d(\gamma(x), \gamma(y)) \leq c_2 d(x, y), \quad x, y \in K.
\]

Let $\gamma = (\gamma_1, \dots, \gamma_n)$ be a family of proper contractions
on $(K,d)$. We say that $K$ is called {\it self-similar}
with respect to $\gamma$ if $K = \bigcup_{i = 1} ^n \gamma_i (K)$.
See \cite{F} and \cite{Ki} for more on fractal sets.

\begin{defn}
We say that $\gamma$ satisfies the {\it open set condition} in $K$
if there exists a non-empty open set $V \subset K$ such that
\[
   \bigcup_{i = 1} ^n \gamma_i(V) \subset V \quad \text{and} \quad
   \gamma_i (V) \cap \gamma_j (V) = \emptyset \quad \text{for} \quad
   i \neq j.
\]
\end{defn}

For a system $\gamma$ of proper contractions on a compact metric space $K$,
we introduce the following subsets of $K$.
\begin{align*}
B_\gamma &= \{ y \in K \, | \, y = \gamma_i (x) = \gamma_j (x)
\, \, \text{for some} \, \, x \in K \, \,  \text{and} \, \, i \neq j \}, \\
C_\gamma &= \{ x \in K \, | \, \gamma_i (x) = \gamma_j (x) \, \, 
\text{for some} \, \, i \neq j \}.
\end{align*}

\begin{defn}
We say that $\gamma$ satisfies the finite branch condition if $C_\gamma$
is finite set.
\end{defn}

In this paper, we consider $L^p$ spaces with respect to
Hutchinson measures. We recall the definition of
Hutchinson measures.

\begin{lem}[\cite{Hu}] \label{lem:Hutchinson measure}
Let $K$ be a compact metric space and let $\gamma$ be a system of proper
contractions. If $p_1, \dots, p_n \in \mathbb{R}$ satisfy
$\sum_{i=1} ^n p_i = 1$ and $p_i > 0$ for $i$,
then there exists a unique measure $\mu$ on $K$ such that
\[
   \mu(E) = \sum_{i=1}^n  p_i \mu (\gamma_i ^{-1}(E))
\]
for $E \in \mathcal{B}(K)$.
\end{lem}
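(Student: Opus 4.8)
The plan is to realize $\mu$ as the unique fixed point of a contraction on a complete metric space of probability measures, following Hutchinson's original argument. First I would rephrase the defining identity $\mu(E) = \sum_{i=1}^n p_i \mu(\gamma_i^{-1}(E))$ as a fixed-point equation $T\mu = \mu$, where $T$ acts on Borel measures by $T\mu = \sum_{i=1}^n p_i (\gamma_i)_* \mu$ and $(\gamma_i)_*\mu$ is the pushforward, $(\gamma_i)_*\mu(E) = \mu(\gamma_i^{-1}(E))$. Since each $\gamma_i$ is continuous, hence Borel, and $\gamma_i^{-1}(K) = K$, the hypothesis $\sum_i p_i = 1$ shows that $T$ preserves total mass; in particular $T$ maps the set $P(K)$ of Borel probability measures on $K$ into itself. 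I would work throughout on $P(K)$, so that ``unique measure'' is understood as the unique invariant probability measure; the defining equation is homogeneous in $\mu$, so any nonzero finite solution is a scalar multiple of the probability solution, and $\mu^H$ is the normalized one.

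Next I would equip $P(K)$ with the Hutchinson (Kantorovich--Wasserstein) metric
\[
 d_H(\mu,\nu) = \sup\left\{ \left| \int_K f\, d\mu - \int_K f\, d\nu \right| : f \colon K \to \mathbb{R} \text{ is } 1\text{-Lipschitz} \right\}.
\]
The hard part will be establishing that $(P(K), d_H)$ is a \emph{complete} metric space: this is where the compactness of $K$ enters decisively. I would argue that, because $K$ is compact, $P(K)$ is weak-$*$ compact (via Riesz representation together with the Banach--Alaoglu theorem), that $d_H$ metrizes the weak-$*$ topology on $P(K)$ (the Kantorovich duality, using density of Lipschitz functions in $C(K)$ for the compact $K$), and hence that $(P(K), d_H)$ is compact, in particular complete. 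This metrization statement is the genuine technical obstacle; the remaining steps are estimates.

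Finally I would verify that $T$ is a contraction. For a $1$-Lipschitz $f$ the change of variables gives $\int_K f\, d(T\mu) = \sum_{i=1}^n p_i \int_K f\circ\gamma_i\, d\mu$, and since $\gamma_i$ is a proper contraction with $d(\gamma_i(x),\gamma_i(y)) \le c_{2,i}\, d(x,y)$, the composite $f\circ\gamma_i$ is $c_{2,i}$-Lipschitz, so $c_{2,i}^{-1} f\circ\gamma_i$ is admissible in the supremum defining $d_H$. Writing $c = \max_i c_{2,i} < 1$, this yields
\[
 \left| \int_K f\, d(T\mu) - \int_K f\, d(T\nu) \right| \le \sum_{i=1}^n p_i \left| \int_K f\circ\gamma_i\, d\mu - \int_K f\circ\gamma_i\, d\nu \right| \le c\, d_H(\mu,\nu),
\]
and taking the supremum over $1$-Lipschitz $f$ gives $d_H(T\mu, T\nu) \le c\, d_H(\mu,\nu)$ with $c < 1$. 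The Banach fixed-point theorem then produces a unique $\mu \in P(K)$ with $T\mu = \mu$, which is the asserted Hutchinson measure; uniqueness within $P(K)$ is immediate from the contraction, and uniqueness of the general finite solution follows from the homogeneity noted above.
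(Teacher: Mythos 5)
Your proposal is correct and is precisely the argument of the cited reference \cite{Hu}: the paper itself gives no proof, attributing the lemma to Hutchinson, whose original proof is exactly this Banach fixed-point argument for $T\mu=\sum_i p_i(\gamma_i)_*\mu$ on $(P(K),d_H)$. Your added remark on homogeneity is a sensible clarification, since the equation only pins $\mu$ down up to a scalar and ``unique'' must be read as unique probability (equivalently, normalized) solution.
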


\begin{defn}
We call the measure $\mu$ given by Lemma \ref{lem:Hutchinson measure}
the {\it self-similar measure} on $K$ with $\{p_i \}_{i = 1} ^n$.
In particular, we denote by $\mu^H$ the self-similar measure with
$p_i = \frac{1}{n}$ for $i$ and call this measure the Hutchinson measure.
\end{defn}

\begin{defn}[\cite{EKM}]
We say that $\gamma$ satisfies the {\it measure separation condition} in $K$
if $\mu (\gamma_i (K) \cap \gamma_j (K)) = 0$ for any self-similar measure $\mu$ and $i \neq j$.
\end{defn}

If $K \subset \mathbb{R}^d$, it is known that the open set condition is
equivalent to the measure separation condition. The theorem states that
many examples of systems of proper contractions satisfy
the measure separation condition.

\begin{thm}[\cite{Sc}]
Let $\gamma$ be a system of proper contractions. Assume
that $K \subset \mathbb{R}^d$ and $K$ is self-similar. Then the following
conditions are equivalent.
\begin{enumerate}
\item
$\gamma$ satisfies the open set condition in $K$.
\item
$\gamma$ satisfies the measure separation condition in $K$.
\end{enumerate}
\end{thm}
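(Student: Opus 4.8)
The plan is to prove the two implications separately, with the Euclidean hypothesis $K\subset\mathbb{R}^d$ entering through a packing estimate that has no counterpart in a general metric space. Throughout I write $\gamma_w=\gamma_{w_1}\circ\cdots\circ\gamma_{w_m}$ for a word $w=w_1\cdots w_m\in\{1,\dots,n\}^m$ and $K_w=\gamma_w(K)$, so that $K=\bigcup_{|w|=m}K_w$ for every $m$; I also record the iterated form $\mu(E)=\sum_{|w|=m}p_w\,\mu(\gamma_w^{-1}(E))$ of the self-similarity relation in Lemma~\ref{lem:Hutchinson measure}, where $p_w=p_{w_1}\cdots p_{w_m}$. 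Since the overlap set $O=\bigcup_{i\neq j}(\gamma_i(K)\cap\gamma_j(K))$ does not depend on the measure, the measure separation condition is the single assertion $\mu(O)=0$ for every self-similar $\mu$.

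For (1) $\Rightarrow$ (2) I would start from the open set $V$ and iterate: the sets $\gamma_w(V)$, $|w|=m$, are again pairwise disjoint and contained in $V$. Because each $\gamma_i$ is bi-Lipschitz with ratios in $[c_1,c_2]$, each $\gamma_w(V)$ is trapped between balls of radii comparable to $(\text{diam})\cdot c_*^{m}$; a volume (packing) argument in $\mathbb{R}^d$ then shows that any ball of radius $r$ meets only a bounded number---independent of $m$---of the generation-$m$ pieces $K_w$ of comparable size. This bounded multiplicity is the crux: it lets me cover $O$ efficiently and show, via a Borel--Cantelli / density-of-overlap estimate applied to the iterated relation above, that $\mu(O)=0$ uniformly in the weights $(p_i)$. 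When the $\gamma_i$ are genuine similarities this can be phrased more cleanly: one first gets $0<\mathcal{H}^s(K)<\infty$ for the similarity dimension $s$, and equality in the subadditivity $\mathcal{H}^s(K)\le\sum_{|w|=m}\mathcal{H}^s(K_w)$ forces the overlaps to be $\mathcal{H}^s$-null.

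For the converse (2) $\Rightarrow$ (1) I must manufacture an open set $V$ out of the purely measure-theoretic hypothesis $\mu(O)=0$, and this is the genuinely hard direction. The idea is to pick a density point $x_0$ for a fixed self-similar measure $\mu$, so that the generation-$m$ cylinders $K_w$ through $x_0$ shrink with controlled, uniformly separated geometry; the hypothesis $\mu(O)=0$ guarantees that distinct cylinders overlap in $\mu$-null sets, which in $\mathbb{R}^d$ upgrades to a uniform geometric separation of the $K_w$ at each generation. Taking $V$ to be a suitable neighborhood of $x_0$ intersected with the complement of the (closed, null) overlaps and pulling back through the $\gamma_w$ produces a non-empty open set with $\bigcup_i\gamma_i(V)\subset V$ and $\gamma_i(V)\cap\gamma_j(V)=\emptyset$; this is in the spirit of Schief's strong-open-set construction.

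The main obstacle is this second implication, and more specifically the two places where $K\subset\mathbb{R}^d$ is indispensable: the bounded-overlap packing estimate for balls of comparable radius (a Besicovitch-type fact) and the passage from ``$\mu$-null overlap'' to ``positive distance / genuine open separation.'' Neither survives in an arbitrary compact metric space---indeed the equivalence itself can fail there---so the heart of the argument is not the bookkeeping with words and weights but the Euclidean geometry that converts a measure-zero overlap into an honest open set.
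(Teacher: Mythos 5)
The paper contains no proof of this statement: it is imported verbatim from the literature (the citation of \cite{Sc} \emph{is} the proof, with the measure-separation formulation coming from \cite{EKM}), so there is nothing in the paper to match your argument against. The only question is whether your sketch would stand on its own, and as written it would not. For $(1)\Rightarrow(2)$ the packing lemma you invoke does give bounded multiplicity of comparable-size generation-$m$ pieces, but the passage from there to $\mu(O)=0$ for \emph{every} weight vector $(p_i)$ is exactly the step you name but do not supply (``Borel--Cantelli / density-of-overlap estimate''); the cleaner subadditivity-of-$\mathcal{H}^s$ argument you mention handles only the single self-similar measure with weights $r_i^s$, and only when the $\gamma_i$ are similarities. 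The route that actually works passes through the \emph{strong} open set condition (a valid $V$ with $V\cap K\neq\emptyset$), from which one gets $\mu(K\setminus V)=0$ for every self-similar $\mu$ by an iteration of the invariance relation; but OSC $\Rightarrow$ SOSC is Schief's main theorem and is itself the hard Euclidean input, so it cannot be absorbed into a packing exercise.

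For $(2)\Rightarrow(1)$ your paragraph describes the desired conclusion rather than a mechanism. The overlap set $O$ is compact and $\mu$-null, but removing it from a neighborhood of a density point does not yield a $V$ with $\gamma_i(V)\cap\gamma_j(V)=\emptyset$: a nonempty intersection $\gamma_i(V)\cap\gamma_j(V)$ sits inside the null set $O$ without any contradiction, because $\gamma_i(V)$ is open only in $\gamma_i(K)$ and not in $K$, so full support of $\mu$ gives you nothing. Converting ``null overlap'' into ``empty overlap for some open set'' is precisely the content of the theorem, and the density-point device as stated does not do it. Finally, the statement is asserted for \emph{proper contractions} (bi-Lipschitz maps), whereas every tool you reach for --- similarity dimension, positivity of $\mathcal{H}^s(K)$, Schief's construction --- lives in the similarity category; you would need either to justify the bi-Lipschitz extension or to acknowledge that you are proving a narrower statement. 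In short, you have correctly located where the hypothesis $K\subset\mathbb{R}^d$ must enter, but the essential steps of both implications are still missing.
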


Let $\varphi:K \to K$ be measureable.
Suppose that $\gamma_1, \dots, \gamma_n$ are inverse
branches of $\varphi$, that is, $\varphi (\gamma_i (x)) = x$
for $x \in K$ and $i = 1, \dots, n$. Let $1 \leq p \leq \infty$.
We shall define the composition operator $C_\varphi$ on
$L^p (K, \mathcal{B}(K), \mu^H)$. 
The measurable function $\varphi$ induces a linear operator $C_\varphi$
from $L^p (K, \mathcal{B}(K), \mu^H)$ to the linear space of all
measurable functions on $(K, \mathcal{B}(K), \mu^H)$ defined as
$C_\varphi f = f \circ \varphi$ for $f \in L^p (K, \mathcal{B}(K), \mu^H)$.
If $C_\varphi : L^p (K, \mathcal{B}(K), \mu^H)
\to L^p (K, \mathcal{B}(K), \mu^H)$ is bounded, it is called
the {\it composition operator}
on $L^p (K, \mathcal{B}(K), \mu^H)$ induced by $\varphi$.

\begin{prop}
Let $\gamma = (\gamma_1, \dots, \gamma_n)$ be a system of proper contractions.
Assume that $K$ is self-similar and
the system $\gamma = (\gamma_1, \dots, \gamma_n)$ satisfies the
measure separation condition in $K$.
Then the operator $C_\varphi$ is an isometry on
$L^p (K, \mathcal{B}(K), \mu^H)$ for  $1 \leq p < \infty$.
\end{prop}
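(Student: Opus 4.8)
The plan is to reduce the isometry property to the invariance of the Hutchinson measure under $\varphi$, and then invoke the change-of-variables (transfer) formula for pushforward measures. Writing $\nu = \varphi_* \mu^H$ for the pushforward measure defined by $\nu(E) = \mu^H(\varphi^{-1}(E))$, the transfer theorem gives, for any $f \in L^p(K, \mathcal{B}(K), \mu^H)$,
\[
   \|C_\varphi f\|_p^p = \int_K |f \circ \varphi|^p \, d\mu^H = \int_K |f|^p \, d\nu .
\]
Thus it suffices to prove $\varphi_* \mu^H = \mu^H$, i.e. $\mu^H(\varphi^{-1}(E)) = \mu^H(E)$ for every $E \in \mathcal{B}(K)$; the isometry property on $L^p$ then follows at once.

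First I would establish the set-theoretic decomposition $\varphi^{-1}(E) = \bigcup_{i=1}^n \gamma_i(E)$. Since each $\gamma_i$ is a proper contraction it is injective (the constant $c_1$ is positive), and since $\gamma_i$ is an inverse branch we have $\varphi(\gamma_i(y)) = y$, so $\varphi$ restricted to $\gamma_i(K)$ coincides with $\gamma_i^{-1}$. Intersecting with the self-similar cover $K = \bigcup_{i=1}^n \gamma_i(K)$ gives $\varphi^{-1}(E) \cap \gamma_i(K) = \gamma_i(E)$, and taking the union over $i$ yields the claim. Here each $\gamma_i$ is a homeomorphism onto its image, so $\gamma_i(E)$ is again a Borel set.

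The heart of the argument is the scaling identity $\mu^H(\gamma_i(E)) = \tfrac{1}{n}\mu^H(E)$. Applying the defining relation of $\mu^H$ (Lemma \ref{lem:Hutchinson measure} with $p_i = \tfrac1n$) to the set $\gamma_i(E)$ gives $\mu^H(\gamma_i(E)) = \tfrac1n \sum_{j=1}^n \mu^H(\gamma_j^{-1}(\gamma_i(E)))$. The diagonal term $j = i$ equals $\tfrac1n \mu^H(E)$ by injectivity of $\gamma_i$. The main obstacle is to show that every off-diagonal term vanishes, and this is exactly where the measure separation condition enters: for $j \neq i$ one has $\gamma_j^{-1}(\gamma_i(E)) \subseteq \gamma_j^{-1}(\gamma_i(K)) = \gamma_j^{-1}(\gamma_i(K) \cap \gamma_j(K))$, and applying the defining relation of $\mu^H$ to the $\mu^H$-null set $\gamma_i(K) \cap \gamma_j(K)$ forces each of its nonnegative summands, in particular $\mu^H(\gamma_j^{-1}(\gamma_i(K) \cap \gamma_j(K)))$, to be zero. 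Hence $\mu^H(\gamma_j^{-1}(\gamma_i(E))) = 0$ for $j \neq i$, which proves the scaling identity.

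Finally, since $\gamma_i(E) \cap \gamma_j(E) \subseteq \gamma_i(K) \cap \gamma_j(K)$ is $\mu^H$-null for $i \neq j$, the union in the decomposition is disjoint modulo a null set, so by finite additivity
\[
   \mu^H(\varphi^{-1}(E)) = \mu^H\Big(\bigcup_{i=1}^n \gamma_i(E)\Big) = \sum_{i=1}^n \mu^H(\gamma_i(E)) = \sum_{i=1}^n \tfrac{1}{n}\mu^H(E) = \mu^H(E).
\]
This gives $\varphi_* \mu^H = \mu^H$ and completes the proof. I expect the delicate point to be the vanishing of the off-diagonal terms; everything else is bookkeeping with the Hutchinson relation and finite additivity modulo null sets.
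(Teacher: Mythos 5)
Your proof is correct and follows essentially the same route as the paper: both decompose via the self-similar cover $K=\bigcup_i\gamma_i(K)$, use the measure separation condition to discard the overlaps, and reduce everything to the scaling identity $\mu^H(\gamma_i(E))=\tfrac1n\mu^H(E)$ — you merely package the computation as the invariance $\varphi_*\mu^H=\mu^H$ before applying the transfer formula, whereas the paper performs the change of variables directly inside the integral. The one substantive difference is in your favor: the paper asserts the scaling identity from the bijectivity of $\gamma_i:K\to\gamma_i(K)$ alone, while you correctly derive it from the Hutchinson relation applied to $\gamma_i(E)$ together with the measure separation condition to kill the off-diagonal terms $\mu^H(\gamma_j^{-1}(\gamma_i(E)))$, which is the justification the paper's proof actually needs.
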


\begin{proof}
Since $\gamma_i$ is proper contraction, $\gamma_i: K \to \gamma_i(K)$
is bijective. Thus we have $(\mu^H \circ \gamma_i) (E) = \frac{1}{n}
\mu^H (E)$ for $E \in \mathcal{B}(K)$, where $\mu^H \circ \gamma_i$ is the
measure on $K$ defined by $(\mu^H \circ \gamma_i) (E)
= \mu^H (\gamma_i (E))$.
Since $K$ is self-similar and $\gamma$ satisfies the measure separation
condition in $K$, 
\begin{align*}
\| C_\varphi f \|_p ^p
&= \int_K | f(\varphi(x))|^p \, d \mu^H(x)
= \sum_{i = 1} ^n \int_{\gamma_i(K)} |f(\varphi(x))|^p \, d \mu^H(x) \\
&= \sum_{i = 1} ^n \int_K |f(y)|^p \, d (\mu^H \circ \gamma_i) (y)
= \sum_{i = 1} ^n \int_K |f(y)|^p \, \frac{1}{n} \, d \mu^H (y) \\
&= \int_K |f(y)|^p \, d \mu^H (y)
= \| f \|_p ^p
\end{align*}
for $f \in L^p (K, \mathcal{B}(K), \mu^H)$, which completes the proof.
\end{proof}

For $f \in L^1 (K, \mathcal{B}(K), \mu^H)$, we define an function $\mathcal{L}_\varphi f: K \to \mathbb{C}$
by
\[
   (\mathcal{L}_\varphi f)(x) = \frac{1}{n} \sum_{i = 1} ^n f(\gamma_i (x)),
   \quad x \in K.
\]
For $f \in C(K)$, we can easily see that $\mathcal{L}_\varphi f \in C(K)$ since $\gamma_1, \dots, \gamma_n$ are continuous functions.

\begin{lem} \label{lem:adjoint}
Let $\gamma = (\gamma_1, \dots, \gamma_n)$ be a system of proper contractions.
Assume that $K$ is self-similar and
the system $\gamma = (\gamma_1, \dots, \gamma_n)$ satisfies the
measure separation condition in $K$. Then $\mathcal{L}_\varphi$ is bounded on
$L^\infty (K, \mathcal{B}(K), \mu^H)$ and $C_\varphi ^* = \mathcal{L}_\varphi$,
where $C_\varphi$ is an operator on $L^1 (K, \mathcal{B}(K), \mu^H)$.
\end{lem}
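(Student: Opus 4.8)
The plan is to establish the two assertions in turn: first that $\mathcal{L}_\varphi$ maps $L^\infty(K, \mathcal{B}(K), \mu^H)$ boundedly into itself, and then that it coincides with the Banach-space adjoint of $C_\varphi$ acting on $L^1(K, \mathcal{B}(K), \mu^H)$. Since $\mu^H$ is a finite (indeed probability) measure, I identify the dual of $L^1(K, \mathcal{B}(K), \mu^H)$ with $L^\infty(K, \mathcal{B}(K), \mu^H)$ via the pairing $\langle f, g\rangle = \int_K f g \, d\mu^H$, so that $C_\varphi^*$ is an operator on $L^\infty(K, \mathcal{B}(K), \mu^H)$.

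For the boundedness of $\mathcal{L}_\varphi$, the crucial observation is that each $\gamma_i$ pulls back $\mu^H$-null sets to $\mu^H$-null sets. Indeed, the defining identity of the Hutchinson measure gives $\mu^H(E) = \frac{1}{n}\sum_{j=1}^n \mu^H(\gamma_j^{-1}(E))$, so if $\mu^H(E) = 0$ then every term vanishes and in particular $\mu^H(\gamma_i^{-1}(E)) = 0$ for each $i$. Applying this to $E = \{\, y : |g(y)| > \|g\|_\infty \,\}$ shows that $|g \circ \gamma_i| \leq \|g\|_\infty$ holds $\mu^H$-a.e., and applying it to $\{\, y : g(y) \neq g'(y)\,\}$ for two representatives $g, g'$ of the same class shows that $g\circ\gamma_i$, and hence $\mathcal{L}_\varphi g$, is well defined independently of the chosen representative. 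Consequently $\|\mathcal{L}_\varphi g\|_\infty \leq \|g\|_\infty$, so $\mathcal{L}_\varphi$ is a bounded operator (in fact a contraction) on $L^\infty(K, \mathcal{B}(K), \mu^H)$.

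For the adjoint, note first that by the preceding Proposition $C_\varphi$ is an isometry, hence bounded, on $L^1(K, \mathcal{B}(K), \mu^H)$, so $C_\varphi^*$ is a well-defined operator on $L^\infty$. Given $f \in L^1$ and $g \in L^\infty$, I would compute $\int_K (C_\varphi f)\, g \, d\mu^H = \int_K f(\varphi(x))\, g(x)\, d\mu^H(x)$ by splitting $K = \bigcup_{i=1}^n \gamma_i(K)$. Since $\gamma$ satisfies the measure separation condition, the overlaps $\gamma_i(K)\cap\gamma_j(K)$ are $\mu^H$-null, so this integral equals $\sum_{i=1}^n \int_{\gamma_i(K)} f(\varphi(x))\, g(x)\, d\mu^H(x)$. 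On $\gamma_i(K)$ the map $\gamma_i\colon K \to \gamma_i(K)$ is a bijection and $\varphi\circ\gamma_i = \mathrm{id}_K$, so the change of variables $x = \gamma_i(y)$ together with the relation $(\mu^H\circ\gamma_i)(E) = \frac{1}{n}\mu^H(E)$ established in the proof of the previous Proposition turns each summand into $\frac{1}{n}\int_K f(y)\, g(\gamma_i(y))\, d\mu^H(y)$. Summing over $i$ yields
\[
  \int_K (C_\varphi f)\, g \, d\mu^H = \int_K f(y)\,\Bigl(\tfrac{1}{n}\sum_{i=1}^n g(\gamma_i(y))\Bigr)\, d\mu^H(y) = \int_K f\,(\mathcal{L}_\varphi g)\, d\mu^H = \langle f, \mathcal{L}_\varphi g\rangle,
\]
and since this holds for all $f \in L^1$ we conclude $C_\varphi^* g = \mathcal{L}_\varphi g$.

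The step I expect to require the most care is making the change of variables fully rigorous for a general integrand built from $f \in L^1$ rather than a bounded or continuous function. The identity $\int_{\gamma_i(K)} h \, d\mu^H = \frac{1}{n}\int_K (h\circ\gamma_i)\, d\mu^H$ follows from the pushforward relation $\mu^H\circ\gamma_i = \frac{1}{n}\mu^H$ and the fact that $\gamma_i$ is a homeomorphism onto the compact, hence Borel, set $\gamma_i(K)$; I would verify it first for indicator functions, extend to simple and then nonnegative measurable functions by monotone convergence, and finally to the integrable product $f(\varphi(\cdot))\,g(\cdot)$ by decomposing into real and imaginary, positive and negative parts. One must also check that restricting this $L^1$ function to $\gamma_i(K)$ and discarding the overlaps introduces no ambiguity, which is precisely what the measure separation condition guarantees.
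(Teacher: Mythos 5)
Your proof is correct and follows essentially the same route as the paper: decompose $K = \bigcup_{i=1}^n \gamma_i(K)$ (with overlaps negligible by the measure separation condition), change variables via $\mu^H\circ\gamma_i = \frac{1}{n}\mu^H$, and collect the sum into $\mathcal{L}_\varphi$. The only difference is that you explicitly verify the $L^\infty$-contractivity of $\mathcal{L}_\varphi$ and the measure-theoretic details of the change of variables, which the paper leaves implicit.
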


\begin{proof}
Since $K$ is self-similar and $\gamma$ satisfies the measure separation
condition in $K$, we have
\begin{align*}
\langle C_\varphi ^* f, g \rangle &= \langle f, C_\varphi g \rangle
= \int_K f(x) g(\varphi(x)) \, d \mu^H(x)
= \sum_{i = 1} ^n \int_{\gamma_i(K)} f(x) g(\varphi(x)) \, d \mu^H(x) \\
&= \sum_{i = 1} ^n \int_K f(\gamma_i(y)) g(y) \, d (\mu^H \circ \gamma_i) (y)
= \sum_{i = 1} ^n \int_K f(\gamma_i(y)) g(y) \, \frac{1}{n} \, d \mu^H (y) \\
&= \int_K (\mathcal{L}_\varphi f)(y) g(y) \, d \mu^H (y)
= \langle \mathcal{L}_\varphi  f, g \rangle
\end{align*}
for $f \in L^\infty (K, \mathcal{B}(K), \mu^H)$
and $g \in  L^1 (K, \mathcal{B}(K), \mu^H)$,
which completes the proof.
\end{proof}

For $a \in L^\infty (K, \mathcal{B}(K), \mu^H)$, we define
the multiplication operator $M_a$ on
$L^2 (K, \mathcal{B}(K), \mu^H)$ by
$M_a f = a f$ for $f \in L^2 (K, \mathcal{B}(K), \mu^H)$.
We have the following covariant relation by the same
argument in the proof of \cite[Proposition 2.3]{H2}.

\begin{prop} \label{prop:covariant}
Let $\gamma = (\gamma_1, \dots, \gamma_n)$ be a system of proper contractions.
Assume that $K$ is self-similar and
the system $\gamma = (\gamma_1, \dots, \gamma_n)$ satisfies the
measure separation condition in $K$.
Let
$C_\varphi$ be the composition operator on $L^2 (K, \mathcal{B}(K), \mu^H)$
with $\varphi$. Then we have
\[
  C_\varphi ^* M_a C_\varphi = M_{\mathcal{L}_\varphi (a)}
\]
for $a \in L^\infty (K, \mathcal{B}(K), \mu^H)$.
\end{prop}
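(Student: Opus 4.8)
The plan is to verify the identity weakly, by pairing both sides against arbitrary test vectors. Note first that the right-hand side is a legitimate bounded operator: since $\gamma$ satisfies the measure separation condition, Lemma~\ref{lem:adjoint} guarantees that $\mathcal{L}_\varphi$ maps $L^\infty$ into $L^\infty$, so $\mathcal{L}_\varphi(a) \in L^\infty(K, \mathcal{B}(K), \mu^H)$ and $M_{\mathcal{L}_\varphi(a)}$ is well defined. As both $C_\varphi^* M_a C_\varphi$ and $M_{\mathcal{L}_\varphi(a)}$ are bounded on $L^2(K, \mathcal{B}(K), \mu^H)$, it suffices to prove that $\langle C_\varphi^* M_a C_\varphi f, g \rangle = \langle M_{\mathcal{L}_\varphi(a)} f, g \rangle$ for all $f, g \in L^2(K, \mathcal{B}(K), \mu^H)$.

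First I would move the adjoint back across the inner product, writing $\langle C_\varphi^* M_a C_\varphi f, g \rangle = \langle M_a C_\varphi f, C_\varphi g \rangle = \int_K a(x)\, f(\varphi(x))\, \overline{g(\varphi(x))}\, d\mu^H(x)$. The computation then mirrors the proof of Lemma~\ref{lem:adjoint} exactly, with the single extra factor $a(x)$ carried through. Using self-similarity $K = \bigcup_{i=1}^n \gamma_i(K)$ together with the measure separation condition (which makes the pairwise overlaps $\gamma_i(K) \cap \gamma_j(K)$ $\mu^H$-null), I would split this integral as $\sum_{i=1}^n \int_{\gamma_i(K)} a(x)\, f(\varphi(x))\, \overline{g(\varphi(x))}\, d\mu^H(x)$.

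Next I would apply the change of variables $x = \gamma_i(y)$ on each piece. Here the inverse-branch hypothesis $\varphi(\gamma_i(y)) = y$ collapses $f(\varphi(x))$ and $g(\varphi(x))$ to $f(y)$ and $g(y)$, while the measure scaling $(\mu^H \circ \gamma_i)(E) = \tfrac{1}{n}\mu^H(E)$ established in the isometry proposition supplies the factor $\tfrac{1}{n}$. This converts the sum into $\sum_{i=1}^n \tfrac{1}{n} \int_K a(\gamma_i(y))\, f(y)\, \overline{g(y)}\, d\mu^H(y)$. Collecting the sum inside the integral, the coefficient becomes $\tfrac{1}{n}\sum_{i=1}^n a(\gamma_i(y)) = (\mathcal{L}_\varphi a)(y)$, so the whole expression equals $\int_K (\mathcal{L}_\varphi a)(y)\, f(y)\, \overline{g(y)}\, d\mu^H(y) = \langle M_{\mathcal{L}_\varphi(a)} f, g \rangle$, which is what I want.

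There is no genuine obstacle here: the argument is the adjoint computation of Lemma~\ref{lem:adjoint} with an inserted factor $a$, and every ingredient — the decomposition of $K$, the nullity of the overlaps, the inverse-branch identity, and the scaling of $\mu^H$ under $\gamma_i$ — is already available. The only points requiring a moment's care are the measurability justification for the change of variables on each $\gamma_i(K)$ and the observation that $\mathcal{L}_\varphi(a) \in L^\infty$, both of which are handled by the earlier results; this is presumably why the statement can be dispatched by reference to the corresponding argument in \cite{H2}.
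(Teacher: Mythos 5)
Your argument is correct and is essentially the paper's: the paper also pairs against test vectors, writes $\langle C_\varphi^* M_a C_\varphi f, g\rangle = \int_K a\,(f\circ\varphi)\overline{(g\circ\varphi)}\,d\mu^H$, and then simply observes that $(f\circ\varphi)\overline{(g\circ\varphi)} = C_\varphi(f\overline{g})$ with $f\overline{g}\in L^1$, so Lemma~\ref{lem:adjoint} applied to the $L^1$--$L^\infty$ pairing finishes the proof in one line. You instead inline the proof of that lemma (decomposition of $K$, measure separation, change of variables, the $\tfrac1n$ scaling), which is the same computation carried out by hand; both are valid.
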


\begin{proof}
For $f, g \in L^2 (K, \mathcal{B}(K), \mu^H)$,
we have
\begin{align*}
  \langle C_\varphi ^* M_a C_\varphi f, g \rangle
  &= \langle M_a C_\varphi f,  C_\varphi g \rangle
  = \int_K a (f \circ \varphi) 
    \overline {(g \circ \varphi)} d \mu^H \\
  &= \int_K a C_\varphi (f \overline{g}) d \mu^H
  = \int_K \mathcal{L}_\varphi (a) f \overline{g} d \mu^H
  = \langle M_{\mathcal{L}_\varphi (a)} f, g \rangle
\end{align*}
by Lemma \ref{lem:adjoint}, where
$C_\varphi$ is also regarded as the composition operator
on $L^1(K, \mathcal{B}(K), \mu^H)$.
\end{proof}

\section{C$^*$-algebras associated with self-similar sets}

We recall the construction of Cuntz-Pimsner algebras \cite{Pi} (see also
\cite{K}). 
Let $A$ be a C$^*$-algebra and let $X$ be a right Hilbert $A$-module.
A sequence $\{ u_i \}_{i=1} ^\infty$
of $X$ is called a {\it countable basis} of X if
$\xi = \sum_{i=1} ^\infty u_i \langle u_i, \xi \rangle_A$
for $\xi \in X$, where the right hand side converges in norm.
We denote by $\mathcal{L}(X)$ the C$^*$-algebra of the adjointable bounded operators 
on $X$.  
For $\xi$, $\eta \in X$, the operator $\theta _{\xi,\eta}$
is defined by $\theta _{\xi,\eta}(\zeta) = \xi \langle \eta, \zeta \rangle_A$
for $\zeta \in X$. 
The closure of the linear span of these operators is denoted by $\mathcal{K}(X)$. 
We say that 
$X$ is a {\it Hilbert bimodule} (or {\it C$^*$-correspondence}) 
over $A$ if $X$ is a right Hilbert $A$-module 
with a $*$-homomorphism $\phi : A \rightarrow \mathcal{L}(X)$.
We always assume that $\phi$ is injective. 

A {\it representation} of the Hilbert bimodule $X$
over $A$ on a C$^*$-algebra $D$
is a pair $(\rho, V)$ constituted by a $*$-homomorphism $\rho: A \to D$ and
a linear map $V: X \to D$ satisfying
\[
  \rho(a) V_\xi = V_{\phi(a) \xi}, \quad
  V_\xi ^* V_\eta = \rho( \langle \xi, \eta \rangle_A)
\]
for $a \in A$ and $\xi, \eta \in X$.
It is known that $V_\xi \rho(b) = V_{\xi b}$ follows
automatically (see for example \cite{K}).
We define a $*$-homomorphism $\psi_V : \mathcal{K}(X) \to D$
by $\psi_V ( \theta_{\xi, \eta}) = V_{\xi} V_{\eta}^*$ for $\xi, \eta \in X$
(see for example \cite[Lemma 2.2]{KPW}).
A representation $(\rho, V)$ is said to be {\it covariant} if
$\rho(a) = \psi_V(\phi(a))$ for all $a \in J(X)
:= \phi ^{-1} (\mathcal{K}(X))$.
Suppose the Hilbert bimodule $X$
has a countable basis $\{u_i \}_{i=1} ^\infty$ and $(\rho, V)$ is
a representation of $X$.
Then $(\rho, V)$ is covariant if and only if
$\| \sum_{i=1} ^n \rho(a) V_{u_i} V_{u_i} ^* - \rho(a) \| \to 0$ as
$n \to \infty$ for $a \in J(X)$, since $\{ \sum_{i=1} ^n \theta_{u_i, u_i} \}_{n=1} ^\infty$ is an approximate unit for $\mathcal{K}(X)$.

Let $(i, S)$ be the representation of $X$ which is universal for
all covariant representations. 
The {\it Cuntz-Pimsner algebra} ${\mathcal O}_X$ is 
the C$^*$-algebra generated by $i(a)$ with $a \in A$ and 
$S_{\xi}$ with $\xi \in X$.
We note that $i$ is known to be injective
\cite{Pi} (see also \cite[Proposition 4.11]{K}).
We usually identify $i(a)$ with $a$ in $A$.

Let $\gamma = (\gamma_1, \dots, \gamma_n)$ be a system of proper contractions
on a compact metric space $K$.
Let $A = C(K)$ and $Y = C(\mathcal{C})$,
where $\mathcal{C} = \bigcup_{i = 1} ^n \{ (\gamma_i(y), y) \, | \, y
\in K \}$  is the cograph of $\gamma_i$.
Then $Y$ is an $A$-$A$ bimodule over $A$ by 
\[
  (a \cdot f \cdot b)(\gamma_i(y),y) = a(\gamma_i(y)) f(\gamma_i(y),y) b(y),\quad a, b \in A, \, 
  f \in Y.
\]
We define an $A$-valued inner product $\langle \ , \ \rangle_A$ on $Y$ by 
\[
  \langle f, g \rangle_A (y) = \sum _{i=1} ^n
  \overline{f(\gamma_i (y), y)} g(\gamma_i(y), y),
  \quad f, g \in Y, \, y \in K.
\]
Then $Y$ is a Hilbert bimodule over $A$. 
The C$^*$-algebra ${\mathcal O}_\gamma (K)$
is defined as the Cuntz-Pimsner algebra of the Hilbert bimodule 
$Y = C(\mathcal{C})$ over $A = C(K)$.

\section{Main theorem}

\begin{defn}
Let $\varphi : K \to K$ be continuous.
Suppose that composition operator $C_\varphi$ on
$L^2(K, \mathcal{B}(K), \mu^{H})$ is bounded.
We denote by $\mathcal{MC}_\varphi$ the C$^*$-algebra
generated by all multiplication operators by
continuous functions in $C(K)$
and the composition operator $C_\varphi$ on $L^2(K, \mathcal{B}(K), \mu^{H})$.
\end{defn}

Let $\gamma = (\gamma_1, \dots, \gamma_n)$ be a system of proper contractions on $K$. Suppose that $\gamma_1, \gamma_2, \dots, \gamma_n$ are inverse branches of $\varphi$ and $K$ is self-similar. In this section we shall show that the C$^*$-algebra $\mathcal{MC}_\varphi$
is isomorphic to the C$^*$-algebra $\mathcal{O}_\gamma (K)$
under some assumptions.

\begin{rem}
We denote by $\mathcal{MC}_{\gamma_1, \gamma_2, \dots, \gamma_n}$
the C$^*$-algebra generated by all multiplication operators by continuous
functions in $C(K)$ and composition operators
$C_{\gamma_i}$ by $\gamma_i$ for $i = 1, 2, \dots, n$
on $L^2 (K, \mathcal{B}(K), \mu^H)$.
The definition of $\mathcal{MC}_\varphi$ is different from
that of $\mathcal{MC}_{\gamma_1, \gamma_2, \dots, \gamma_n}$.
If $\gamma$ satisfies the measure separation condition in $K$, then
we have $C_\varphi^* = \frac{1}{n} \sum_{i = 1} ^n C_{\gamma_i}$
by Lemma \ref{lem:adjoint}.
Thus $\mathcal{MC}_\varphi \subset \mathcal{MC}_{\gamma_1, \gamma_2, \dots, \gamma_n}$. Although $\mathcal{MC}_\varphi$ is not equal to $\mathcal{MC}_{\gamma_1, \gamma_2, \dots, \gamma_n}$ in general. For example,
let $\gamma_1$ and $\gamma_2$ be the inverse branches of
the tent map $\varphi$.
Then $\mathcal{MC}_\varphi$ is not equal to
$\mathcal{MC}_{\gamma_1, \gamma_2}$. We shall consider this case in Section \ref{sec:Ex}.
\end{rem}

Let $\varphi:K \to K$ be continuous.
Let $A = C(K)$ and $X = C(K)$.
Then $X$ is an $A$-$A$ bimodule over $A$ by 
\[
   (a \cdot \xi \cdot b) (x) = a(x) \xi(x) b(\varphi(x)) \quad a,b \in A, \, 
  \xi \in X.
\]
We define an $A$-valued inner product $\langle \ , \ \rangle_A$ on $X$ by
\[
  \langle \xi, \eta \rangle _A (x) = \frac{1}{n} \sum_{i = 1} ^n
  \overline{\xi(\gamma_i (x))} \eta(\gamma_i(x))
  \, \,
  \left (\, = (\mathcal{L}_\varphi (\overline{\xi} \eta))(x) \, \right ), \quad
  \xi, \eta \in X.
\]
Then $X$ is a Hilbert bimodule over $A$.
Put $\| \xi \|_2 = \| \langle \xi, \xi \rangle_A \|_\infty ^{1/2}$
for $\xi \in X$,
where $\| \ \|_\infty$ is the sup norm on $K$.
Let $\Phi: Y \to X$ be defined by
$(\Phi(f))(x) = \sqrt{n} f(x, \varphi(x))$ for $f \in Y$.
It is easy to see that $\Phi$ is an isomorphism and $X$ is
isomorphic to $Y$ as Hilbert bimodules over $A$.
Hence the C$^*$-algebra $\mathcal{O}_\gamma (K)$ is isomorphic to
the Cuntz-Pimsner algebra $\mathcal{O}_X$ constructed from $X$.

We need some analyses based on bases of the Hilbert bimodule $X$
to show an equation containing the composition operator $C_\varphi$
and multiplication operators.

\begin{lem} \label{lem:key_lem}
Let $u_1, \dots, u_N \in X$ and
let $\gamma = (\gamma_1, \dots, \gamma_n)$ be a system of proper contractions.
Assume that $K$ is self-similar and
the system $\gamma = (\gamma_1, \dots, \gamma_n)$ satisfies the
measure separation condition in $K$.
Then
\[
  \sum_{i=1} ^N M_{u_i} C_\varphi C_\varphi ^* M_{u_i} ^* a
  = \sum_{i=1} ^N u_i \cdot \langle u_i , a \rangle_A
\]
for $a \in A$.
\end{lem}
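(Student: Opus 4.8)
The plan is to reduce the claimed identity to a pointwise computation of continuous functions, handled one summand at a time. First I would record that the Hilbert space adjoint of $C_\varphi$ on $L^2(K, \mathcal{B}(K), \mu^H)$ is given by $\mathcal{L}_\varphi$, that is,
\[
  (C_\varphi^* f)(x) = \frac{1}{n}\sum_{j=1}^n f(\gamma_j(x)).
\]
This follows from exactly the computation in the proof of Lemma \ref{lem:adjoint}, now carried out for the $L^2$ pairing, which again uses that $K$ is self-similar and that $\gamma$ satisfies the measure separation condition (so that $\sum_i \int_{\gamma_i(K)} = \int_K$ and $(\mu^H\circ\gamma_i)(E) = \tfrac1n \mu^H(E)$). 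Since $a, u_i \in C(K)$ and $\gamma_1,\dots,\gamma_n,\varphi$ are continuous, each of the operators $M_{u_i}^*$, $C_\varphi^* = \mathcal{L}_\varphi$, $C_\varphi$ and $M_{u_i}$ sends $C(K)$ into $C(K)$; hence every quantity below is a genuine continuous function and the identity, once verified pointwise, holds as an identity in $L^2$.

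Next I would prove the identity summand by summand, so that the sum over $i$ is trivial once each term is matched. Fixing $i$ and applying the four operators to $a$ in turn, namely $M_{u_i}^* a = \overline{u_i}\, a$, then $C_\varphi^*(\overline{u_i}\,a) = \mathcal{L}_\varphi(\overline{u_i}\,a)$, then precomposition with $\varphi$ to apply $C_\varphi$, and finally multiplication by $u_i$, yields
\[
  \bigl(M_{u_i} C_\varphi C_\varphi^* M_{u_i}^* a\bigr)(x)
  = u_i(x)\,\frac{1}{n}\sum_{j=1}^n \overline{u_i(\gamma_j(\varphi(x)))}\, a(\gamma_j(\varphi(x))).
\]

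For the right-hand side I would simply unwind the bimodule structure: by the definition of the $A$-valued inner product, $\langle u_i, a\rangle_A(y) = \tfrac1n\sum_{j=1}^n \overline{u_i(\gamma_j(y))}\,a(\gamma_j(y))$, and by the definition of the right $A$-action, $(u_i\cdot b)(x) = u_i(x)\, b(\varphi(x))$. Substituting $b = \langle u_i, a\rangle_A$ and $y = \varphi(x)$ produces exactly the continuous function displayed above, so the two sides agree pointwise; summing over $i$ then gives the lemma. I do not expect a genuine obstacle here, since the content is bookkeeping with the definitions. The only points that need care are the passage $C_\varphi^* = \mathcal{L}_\varphi$ on $L^2$, and the accompanying remark that all intermediate functions remain continuous, so that applying $C_\varphi$ (an $L^2$ operation defined a priori only up to null sets) is legitimately computed on honest continuous representatives.
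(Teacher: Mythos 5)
Your proof is correct and is essentially the computation the paper carries out: both reduce the claim to the identity $M_{u_i}C_\varphi C_\varphi^* M_{u_i}^* a = u_i\,\bigl(\mathcal{L}_\varphi(\overline{u_i}\,a)\bigr)\circ\varphi = u_i\cdot\langle u_i,a\rangle_A$, the paper doing so via the already-established covariance relation $C_\varphi^* M_b C_\varphi = M_{\mathcal{L}_\varphi(b)}$ together with the trick $a = M_a C_\varphi 1$, while you substitute the explicit $L^2$-adjoint formula $C_\varphi^* = \mathcal{L}_\varphi$ and evaluate pointwise. The one point you rightly flag --- that $C_\varphi^*=\mathcal{L}_\varphi$ holds for the $L^2$ pairing, not just the $L^\infty$--$L^1$ pairing of Lemma \ref{lem:adjoint} --- does follow from the same change-of-variables computation, so there is no gap.
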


\begin{proof}
Since $a = M_a C_\varphi 1$, we have
\begin{align*}
\sum_{i=1} ^N M_{u_i} C_\varphi C_\varphi ^* M_{u_i} ^* a
&= \sum_{i=1} ^N M_{u_i} C_\varphi C_\varphi ^* M_{u_i} ^* M_a C_\varphi 1 \\
&= \sum_{i=1} ^N M_{u_i} C_\varphi C_\varphi ^* M_{\overline u_i a} C_\varphi 1 \\
&= \sum_{i=1} ^N M_{u_i} C_\varphi M_{\mathcal{L}_\varphi (\overline u_i a)} 1
   \quad \quad \text{by Proposition \ref{prop:covariant}} \\
&= \sum_{i=1} ^N M_{u_i} M_{\mathcal{L}_\varphi (\overline u_i a) \circ \varphi} C_\varphi 1 \\
&= \sum_{i=1} ^N u_i \mathcal{L}_\varphi (\overline u_i a) \circ \varphi \\
&= \sum_{i=1} ^N u_i \cdot \langle u_i , a \rangle_A,
\end{align*}
which completes the proof.
\end{proof}

\begin{lem} \label{lem:norm}
Let $\{ u_i \}_{i=1} ^\infty$ be a countable basis of $X$ and
let $\gamma = (\gamma_1, \dots, \gamma_n)$ be a system of proper contractions.
Assume that $K$ is self-similar and
the system $\gamma = (\gamma_1, \dots, \gamma_n)$ satisfies the
measure separation condition in $K$.
Then
\[
  0 \leq \sum_{i=1} ^N M_{u_i} C_\varphi C_\varphi ^* M_{u_i} ^*  \leq I. 
\]
\end{lem}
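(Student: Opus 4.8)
The plan is to treat the two inequalities separately. The lower bound is immediate: writing $S_i = M_{u_i} C_\varphi$, each summand is $M_{u_i} C_\varphi C_\varphi^* M_{u_i}^* = S_i S_i^* \geq 0$, so the finite sum $T_N := \sum_{i=1}^N M_{u_i} C_\varphi C_\varphi^* M_{u_i}^*$ is a positive (hence self-adjoint and bounded) operator on $L^2(K, \mathcal{B}(K), \mu^H)$. This disposes of $0 \leq T_N$ with no further work.

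For the upper bound $T_N \leq I$ it suffices, since $T_N$ is positive and $C(K)$ is dense in $L^2(K, \mathcal{B}(K), \mu^H)$, to verify $\langle T_N a, a \rangle \leq \langle a, a \rangle$ for every $a \in A = C(K)$, where $\langle \, \cdot \, , \, \cdot \, \rangle$ denotes the $L^2$ inner product. By Lemma \ref{lem:key_lem} I have $T_N a = \sum_{i=1}^N u_i \cdot \langle u_i, a \rangle_A = \sum_{i=1}^N \theta_{u_i, u_i}(a)$, which again lies in $C(K)$; thus on $A$ the operator $T_N$ acts as the $N$-th partial-sum operator $P_N := \sum_{i=1}^N \theta_{u_i, u_i}$ of the basis expansion in the Hilbert bimodule $X$.

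The key step, which I expect to be the main obstacle, is to pass from the $A$-valued inner product on $X$ to the scalar $L^2$ inner product. Using $\langle \xi, \eta \rangle_A = \mathcal{L}_\varphi(\overline{\xi}\, \eta)$ together with the identity $\int_K \mathcal{L}_\varphi(h) \, d\mu^H = \int_K h \, d\mu^H$ (which follows from Lemma \ref{lem:adjoint}, taking $g = 1$ and $C_\varphi 1 = 1$, or equivalently from the self-similarity and measure separation computation used in the isometry proposition), one obtains the bridge identity $\int_K \langle \xi, \eta \rangle_A \, d\mu^H = \int_K \overline{\xi}\, \eta \, d\mu^H = \langle \xi, \eta \rangle$ for all $\xi, \eta \in C(K)$. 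Since $T_N$ is self-adjoint, this gives $\langle T_N a, a \rangle = \langle a, T_N a \rangle = \langle a, P_N a \rangle = \int_K \langle a, P_N a \rangle_A \, d\mu^H$.

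Finally I would invoke the defining property of a countable basis to control the integrand pointwise. From $a = \sum_{i=1}^\infty u_i \langle u_i, a \rangle_A$ one gets $\langle a, a \rangle_A = \sum_{i=1}^\infty \langle u_i, a \rangle_A^* \langle u_i, a \rangle_A$ in $A$, so that $\langle a, a \rangle_A - \langle a, P_N a \rangle_A = \sum_{i=N+1}^\infty \langle u_i, a \rangle_A^* \langle u_i, a \rangle_A \geq 0$ in $C(K)$; evaluating at each point of $K$ yields $\langle a, P_N a \rangle_A(x) \leq \langle a, a \rangle_A(x)$. Integrating against $\mu^H$ and applying the bridge identity gives $\langle T_N a, a \rangle = \int_K \langle a, P_N a \rangle_A \, d\mu^H \leq \int_K \langle a, a \rangle_A \, d\mu^H = \langle a, a \rangle$, which is the desired estimate. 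Once the bridge identity between the module and $L^2$ inner products is in place, the positivity of the basis tails finishes the proof.
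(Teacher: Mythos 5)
Your argument is correct, and it shares the paper's skeleton — positivity of each summand $M_{u_i}C_\varphi C_\varphi^*M_{u_i}^*$, reduction to the quadratic form on $C(K)$ via density, and the identification $T_N a = \sum_{i=1}^N u_i\cdot\langle u_i,a\rangle_A$ from Lemma \ref{lem:key_lem} — but it finishes the key estimate $\langle T_N a,a\rangle\le\langle a,a\rangle$ by a genuinely different route. The paper shows that $\sum_{i=1}^N u_i\cdot\langle u_i,a\rangle_A$ converges to $a$ uniformly (invoking the equivalence of $\|\cdot\|_2$ and $\|\cdot\|_\infty$ from \cite[Proposition 2.1]{KW2}), concludes $\langle T_Na,a\rangle\to\langle a,a\rangle$, and then uses the (implicit) monotonicity of $N\mapsto\langle T_Na,a\rangle$ to bound each term by the limit. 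You instead establish the bridge identity $\int_K\langle\xi,\eta\rangle_A\,d\mu^H=\int_K\overline{\xi}\,\eta\,d\mu^H$ (a correct consequence of $\int_K\mathcal{L}_\varphi(h)\,d\mu^H=\int_K h\,d\mu^H$, which does follow from Lemma \ref{lem:adjoint} with $g=1$), and combine it with the pointwise Bessel inequality $\langle a,P_Na\rangle_A\le\langle a,a\rangle_A$ in $C(K)$, obtained from the positivity of the basis tails. Your version makes the monotonicity explicit at the level of the module-valued inner product, avoids the norm-equivalence input from \cite{KW2}, and only needs convergence of the basis expansion in the module norm rather than in $\|\cdot\|_\infty$; the paper's version is shorter on the page because it delegates the uniform convergence to a cited result. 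Both are complete proofs.
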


\begin{proof}
Set $T_N = \sum_{i=1} ^N M_{u_i} C_\varphi C_\varphi ^* M_{u_i} ^*$.
It is clear that $T_N$ is a positive operator.
We shall show $T_N \leq I$. By Lemma \ref{lem:key_lem},
\[
  \langle T_N f, f \rangle = \int_K (T_N f) (x)
  \overline{f(x)} d \mu ^H (x)
  = \int_K \left( \sum_{i=1} ^N u_i \cdot
  \langle u_i, f \rangle_A \right ) (x) \overline{f(x)} d \mu ^H (x)
\]
for $f \in C(K)$.
Since $\{ u_i \}_{i=1} ^\infty$ is a countable basis of $X$, 
for $f \in C(K)$, we have
$\sum_{i=1} ^N u_i \cdot \langle u_i, f \rangle_A \to f$
with respect to $\| \, \, \|_2$ as $N \to \infty$.
Since the two norms $\| \, \, \|_2$ and $\| \, \, \|_\infty$ are
equivalent (see the proof of \cite[Proposition 2.1]{KW2}),
$\sum_{i=1} ^N u_i \cdot \langle u_i, f \rangle_A$ converges to $f$
with respect to $\| \, \, \|_\infty$.
Thus
\[
  \langle T_N f, f \rangle \to \int_K f(x) \overline{f(x)} d \mu ^H (x) = \langle f, f \rangle \quad {\text as} \, \, \, N \to \infty
\]
for $f \in C(K)$.
Therefore $\langle T_N f, f \rangle \leq \langle f, f \rangle$ for $f \in C(K)$.
Since the Hutchinson measure $\mu^H$ on $K$
is regular, $C(K)$ is dense in $L^2 (K, \mathcal{B}(K), \mu^{H})$. Hence
we have $T_N \leq I$. This completes the proof.
\end{proof}

We now recall a description of the ideal $J(X)$ of $A$.
By \cite[Proposition 2.6]{KW2}, we can write
$J(X) = \{ a \in A \, | \,
a \, \, \text{vanishes on} \, \, B_\gamma \}$.
We define a subset $J(X)^0$ of $J(X)$
by $J(X) ^0 = \{ a \in A \, | \,
a \, \, \text{vanishes on} \, \, B_\gamma \, \,
\text{and has compact support on } K \smallsetminus
B_\gamma \}$.
Then $J(X)^0$ is dense in $J(X)$.

\begin{lem} \label{lem:countable_basis}
Let $\gamma = (\gamma_1, \dots, \gamma_n)$ be a system of proper contractions.
Assume that $K$ is self-similar and the system
$\gamma = (\gamma_1, \dots, \gamma_n)$ satisfies the finite branch
condition and the measure separation condition in $K$. Then
there exists a countable basis $\{ u_i \}_{i=1} ^\infty$ of $X$
such that
\[
  \sum_{i=1} ^\infty M_a M_{u_i} C_\varphi C_\varphi ^* M_{u_i} ^* = M_a
\]
for $a \in J(X)$.
\end{lem}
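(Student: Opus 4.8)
The plan is to recognize the asserted identity as the covariance condition for a natural representation of the Hilbert bimodule $X$, and to verify it through the approximate-unit machinery of Cuntz--Pimsner algebras. Set $\rho(a) = M_a$ and $V_\xi = M_\xi C_\varphi$. The covariant relations $\rho(a)V_\xi = V_{\phi(a)\xi}$ and $V_\xi^* V_\eta = \rho(\langle \xi,\eta\rangle_A)$ follow from $M_a M_\xi = M_{a\xi}$ and from Proposition \ref{prop:covariant} (since $V_\xi^* V_\eta = C_\varphi^* M_{\overline{\xi}\eta} C_\varphi = M_{\mathcal{L}_\varphi(\overline{\xi}\eta)} = M_{\langle \xi,\eta\rangle_A}$), so $(\rho,V)$ is a representation of $X$ and $\psi_V(\theta_{\xi,\eta}) = V_\xi V_\eta^* = M_\xi C_\varphi C_\varphi^* M_\eta^*$ is a well-defined $*$-homomorphism on $\mathcal{K}(X)$. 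Writing $T_N = \sum_{i=1}^N M_{u_i} C_\varphi C_\varphi^* M_{u_i}^*$, one has $\sum_{i=1}^N M_a M_{u_i} C_\varphi C_\varphi^* M_{u_i}^* = M_a T_N = \psi_V\!\left(\phi(a)\sum_{i=1}^N \theta_{u_i,u_i}\right)$, so the statement reduces to showing $\psi_V(\phi(a)) = M_a$ for $a \in J(X)$, that is, that $(\rho,V)$ is covariant.

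First I would construct the countable basis, which is where the finite branch condition enters. Since $C_\gamma$ is finite, $B_\gamma \subseteq \bigcup_i \gamma_i(C_\gamma)$ is a finite subset of $K$. Away from $B_\gamma$ the branches $\gamma_1(x),\dots,\gamma_n(x)$ are distinct, so the fibrewise inner product realizes $X$ locally as sections of a trivial rank-$n$ continuous field of Hilbert spaces; a finite partition of unity subordinate to this separation yields finitely many continuous sections reproducing the fibrewise Parseval identity off $B_\gamma$. Near each of the finitely many branch points the fibre dimension drops, and I would adjoin countably many sections concentrating at these points to restore the identity $\xi = \sum_i u_i \langle u_i,\xi\rangle_A$ globally; this is the construction underlying \cite[Proposition 2.1]{KW2}, which also supplies the equivalence of $\|\cdot\|_2$ and $\|\cdot\|_\infty$ used in Lemma \ref{lem:norm}.

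Granting such a basis, the identity follows cleanly. Because $\{u_i\}$ is a countable basis, $\{\sum_{i=1}^N \theta_{u_i,u_i}\}_N$ is an approximate unit for $\mathcal{K}(X)$; for $a \in J(X)$ we have $\phi(a) \in \mathcal{K}(X)$, so $\phi(a)\sum_{i=1}^N \theta_{u_i,u_i} \to \phi(a)$ in norm, and applying the $*$-homomorphism $\psi_V$ gives $M_a T_N \to \psi_V(\phi(a))$ in norm. On the other hand, the argument in the proof of Lemma \ref{lem:norm} shows $T_N f \to f$ in $L^2(K,\mathcal{B}(K),\mu^H)$ for $f \in C(K)$, whence $0 \le T_N \le I$ and $T_N \to I$ strongly, so $M_a T_N \to M_a$ strongly. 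Since a norm limit is also a strong limit and strong limits are unique, $\psi_V(\phi(a)) = M_a$, which is exactly the claimed identity with convergence in norm. The main obstacle is the basis construction itself: producing genuinely continuous sections that satisfy the exact frame identity across the finitely many branch points, where the fibre dimension is not locally constant. The finite branch condition is precisely what keeps this singular locus finite and hence manageable, and the tent-map case (where the basis is genuinely infinite and accumulates at the single branch point, producing $\mathcal{O}_\infty$) shows that a finite basis cannot suffice in general.
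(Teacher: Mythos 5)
Your proof is correct, but it takes a genuinely different route from the paper's. The paper does not argue via the approximate unit in $\mathcal{K}(X)$; instead it invokes the specific countable basis constructed in \cite[Subsection 3.2]{Kaj} (this is where the finite branch condition is used), whose defining feature is that every $b \in J(X)^0$ has support disjoint from all but finitely many $u_i$. For such $b$ the series truncates to a finite sum, Lemma \ref{lem:key_lem} evaluates it as $\sum_i b\,u_i\cdot\langle u_i,f\rangle_A = bf$ on the dense subspace $C(K)$, and general $a \in J(X)$ is handled by approximating from $J(X)^0$ using the uniform bound $0 \le T_N \le I$ of Lemma \ref{lem:norm}. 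Your argument instead works with an \emph{arbitrary} countable basis: you write the partial sums as $\psi_V(\phi(a)e_N)$ with $e_N = \sum_{i\le N}\theta_{u_i,u_i}$, get norm convergence to $\psi_V(\phi(a))$ from the approximate-unit property (which the paper itself records in Section 3), and identify the limit as $M_a$ by comparing with the strong limit $T_N \to I$ supplied by Lemma \ref{lem:norm}. This is cleaner and formally stronger (it proves the identity for every countable basis, and shows the finite branch condition is not needed beyond securing the existence and description of $J(X)$ and a countable basis), whereas the paper's computation is more elementary and self-contained and produces the particular basis that is convenient to cite. One remark: your middle paragraph sketching an explicit basis via partitions of unity is the least rigorous part of your write-up, but it is also dispensable for your own argument --- you only need that \emph{some} countable basis exists, which you can cite from \cite{KW2} or \cite{Kaj} rather than reconstruct.
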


\begin{proof}
Since $\gamma$ satisfies the finite branch condition,
there exists a countable basis $\{ u_i \}_{i=1} ^\infty$ of $X$
satisfying the following property by \cite[Subsection 3.2]{Kaj}.
For any $b \in J (X) ^0$, there exists $M>0$  such that
${\rm supp} \, b \cap {\rm supp} \, u_m = \emptyset$ for $m \geq M$.
Since $J (X) ^0$ is dense in $J (X)$,
for any $a \in A$ and any $\varepsilon > 0$, there exists
$b \in J (X) ^0$
such that $\| a - b \| < \varepsilon / 2$.
Let $m \geq M$.
Then by Lemma \ref{lem:key_lem} and $b u_i = 0$ for
$i \geq m$, it follows that
\[
   \sum_{i=1} ^m M_b M_{u_i} C_\varphi C_\varphi ^* M_{u_i} ^* f
   = \sum_{i=1} ^m b u_i \cdot \langle u_i, f \rangle_A
   = \sum_{i=1} ^\infty b u_i \cdot \langle u_i, f \rangle_A
   = bf = M_b f
\]
for $f \in C(K)$. Since
$C(K)$ is dense in $L^2(K, \mathcal{B}(K), \mu^H)$, we have
\[
  \sum_{i=1} ^m M_b M_{u_i} C_\varphi C_\varphi ^* M_{u_i} ^* = M_b.
\]
From Lemma \ref{lem:norm} it follows that
\begin{align*}
\left \| \sum_{i=1} ^m M_a M_{u_i} C_\varphi C_\varphi ^* M_{u_i} ^* - M_a \right \|
& \leq
\left \| \sum_{i=1} ^m M_a M_{u_i} C_\varphi C_\varphi ^* M_{u_i} ^*
- \sum_{i=1} ^m M_b M_{u_i} C_\varphi C_\varphi ^* M_{u_i} ^* \right \| \\
& \quad \quad +
\left \| \sum_{i=1} ^m M_b M_{u_i} C_\varphi C_\varphi ^* M_{u_i} ^* - M_b
\right \| + \| M_b - M_a \| \\
& \leq  \| M_a - M_b \| \, \left \| \sum_{i=1} ^m M_{u_i} C_\varphi C_\varphi ^* M_{u_i} ^*
\right \| + \| M_a - M_b \| \\
& < \frac{\varepsilon}{2} + \frac{\varepsilon}{2} = \varepsilon,
\end{align*}
which completes the proof.
\end{proof}

The following theorem is the main result of the paper.

\begin{thm} \label{thm:main}
Let $(K, d)$ be a compact metric space, 
let $\gamma = (\gamma_1, \dots, \gamma_n)$ be a system of proper contractions
on $K$ and let $\varphi: K \to K$ be continuous.
Suppose that $\gamma_1, \dots, \gamma_n$ are inverse branches of
$\varphi$. Assume that $K$ is self-similar and the system
$\gamma = (\gamma_1, \dots, \gamma_n)$ satisfies the open set condition,
the finite branch condition and the measure separation condition in $K$. 
Then $\mathcal{MC}_\varphi$ is isomorphic to $\mathcal{O}_\gamma (K)$.
\end{thm}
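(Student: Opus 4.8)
The plan is to realize $\mathcal{O}_\gamma(K)\cong\mathcal{O}_X$ concretely inside $\mathcal{MC}_\varphi$ by means of the representation $(\rho,V)$ on $L^2(K,\mathcal B(K),\mu^H)$ given by $\rho(a)=M_a$ for $a\in A=C(K)$ and $V_\xi=M_\xi C_\varphi$ for $\xi\in X=C(K)$, and then to invoke a uniqueness theorem to promote the resulting surjection to an isomorphism. First I would check that $(\rho,V)$ is a representation of the Hilbert bimodule $X$. The identity $\rho(a)V_\xi=M_aM_\xi C_\varphi=M_{a\xi}C_\varphi=V_{\phi(a)\xi}$ is immediate from the definition of the left action, and the inner-product identity $V_\xi^*V_\eta=C_\varphi^*M_{\overline\xi\eta}C_\varphi=M_{\mathcal L_\varphi(\overline\xi\eta)}=\rho(\langle\xi,\eta\rangle_A)$ is exactly Proposition \ref{prop:covariant} together with the formula $\langle\xi,\eta\rangle_A=\mathcal L_\varphi(\overline\xi\eta)$. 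The automatic relation $V_\xi\rho(b)=V_{\xi\cdot b}$ then follows from $C_\varphi M_b=M_{b\circ\varphi}C_\varphi$.

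Next I would verify covariance. Using the basis characterization recalled just before the construction of $\mathcal{O}_X$, covariance of $(\rho,V)$ amounts to $\|\sum_{i=1}^N\rho(a)V_{u_i}V_{u_i}^*-\rho(a)\|\to 0$ for $a\in J(X)$ and a countable basis $\{u_i\}$. Since $\rho(a)V_{u_i}V_{u_i}^*=M_aM_{u_i}C_\varphi C_\varphi^*M_{u_i}^*$, this is precisely the content of Lemma \ref{lem:countable_basis}, which supplies a basis whose partial sums equal $M_a$ for every $a\in J(X)$; here the finite branch condition enters through the choice of basis and the measure separation condition through Lemma \ref{lem:key_lem} and Lemma \ref{lem:norm}. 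By the universal property of the Cuntz--Pimsner algebra I then obtain a $*$-homomorphism $\pi:\mathcal{O}_X\to\mathcal{MC}_\varphi$ with $\pi(i(a))=M_a$ and $\pi(S_\xi)=M_\xi C_\varphi$. It is surjective because its range contains all $M_a$ together with $V_1=C_\varphi$, which generate $\mathcal{MC}_\varphi$; and $\rho$ is injective because the Hutchinson measure has full support, so $M_a=0$ forces $a\equiv 0$ for $a\in C(K)$. After identifying $X\cong Y$ via $\Phi$, this gives a canonical surjection $\mathcal{O}_\gamma(K)\twoheadrightarrow\mathcal{MC}_\varphi$.

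The main obstacle is injectivity of $\pi$. The natural tool is the gauge-invariant uniqueness theorem for Cuntz--Pimsner algebras (\cite{KPW}, \cite{K}): since $\rho$ is injective and $(\rho,V)$ is covariant, $\pi$ is an isomorphism once $\mathcal{MC}_\varphi$ carries a circle action $\beta$ with $\beta_z(M_a)=M_a$ and $\beta_z(C_\varphi)=zC_\varphi$ intertwining the gauge action on $\mathcal{O}_X$. The delicate point is that such an action cannot be implemented spatially on $L^2(K,\mu^H)$: a unitary fixing every $M_a$ must be a multiplication operator $M_w$ with $|w|=1$, and $M_wC_\varphi M_w^*=zC_\varphi$ would force $w\cdot(\overline w\circ\varphi)=z$ a.e., i.e.\ $C_\varphi w=\overline z\,w$, a unimodular eigenfunction of the Koopman operator with eigenvalue $\overline z\neq 1$; but under the open set and measure separation conditions $(K,\varphi,\mu^H)$ is measurably conjugate to the one-sided Bernoulli shift, which is exact and hence admits no such eigenfunction.

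I would circumvent this in one of two ways. The robust route is to pass to the ampliated representation $M_a\otimes 1$, $C_\varphi\otimes U$ on $L^2(K,\mu^H)\otimes\ell^2(\mathbb Z)$ with $U$ the bilateral shift: tensoring by the unitary $U$ preserves all the relations verified above, while the gauge action is now implemented spatially by $1\otimes\mathrm{diag}(z^k)_k$, so the gauge-invariant uniqueness theorem applies to this copy; one then transports faithfulness back, using that on the fixed-point subalgebra the extra tensor factor collapses, since $(C_\varphi\otimes U)(C_\varphi\otimes U)^*=C_\varphi C_\varphi^*\otimes 1$. Alternatively, and more directly, under the stated hypotheses $\mathcal{O}_\gamma(K)$ is simple (the self-similar analogue, due to Kajiwara--Watatani \cite{KW2}, of the simplicity of $\mathcal{O}_R(J_R)$ used in \cite{H2}), so any surjection onto the nonzero algebra $\mathcal{MC}_\varphi$ is automatically injective; I expect the paper to take this shorter path. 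Either way, combining surjectivity with injectivity yields $\mathcal{MC}_\varphi\cong\mathcal{O}_\gamma(K)$.
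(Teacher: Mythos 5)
Your proposal is correct and follows essentially the same route as the paper: the same covariant representation $\rho(a)=M_a$, $V_\xi=M_\xi C_\varphi$, the same verification of the bimodule relations via Proposition \ref{prop:covariant}, covariance via Lemma \ref{lem:countable_basis}, injectivity of $\rho$ from full support of $\mu^H$, and finally injectivity of the induced surjection from the simplicity of $\mathcal{O}_\gamma(K)$ (\cite[Theorem 3.8]{KW2}), which is exactly the path the paper takes. Your alternative gauge-invariant-uniqueness route via ampliation is a valid fallback but is not needed here.
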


\begin{proof}
Put $\rho(a) = M_a$ and $V_\xi = M_\xi C_\varphi$ for $a \in A$ and $\xi \in X$.
Then we have
\[
  \rho(a) V_\xi = M_a M_\xi C_\varphi = M_{a \xi} C_\varphi =V_{a \cdot \xi}
\]
and
\[
  V_\xi ^* V_\eta = C_\varphi ^* M_\xi ^* M_\eta C_\varphi 
  = C_\varphi ^* M_{\overline{\xi} \eta} C_\varphi
  = M_{\mathcal{L}_\varphi (\overline{\xi} \eta)}
  = \rho (\mathcal{L}_\varphi (\overline{\xi} \eta))
  = \rho ( \langle \xi, \eta \rangle_A)
\]
for $a \in A$ and $\xi, \eta \in X$
by Proposition \ref{prop:covariant}.
Let $\{ u_i \} _{i=1} ^\infty$ be a countable basis of $X$.
Then, applying Lemma \ref{lem:countable_basis},
\[
  \sum_{i=1} ^\infty \rho(a) V_{u_i} V_{u_i} ^*
  = \sum_{i=1} ^\infty M_a M_{u_i} C_\varphi C_\varphi ^* M_{u_i} ^*
  = M_a = \rho(a)
\]
for $a \in J (X)$.
Since the support of the Hutchinson measure $\mu^H$ is
the self-similar set $K$, the $*$-homomorphism $\rho$ is injective.
By the universality and the simplicity of $\mathcal{O}_\gamma (K)$
(\cite[Theorem 3.8]{KW2}),
the C$^*$-algebra $\mathcal{MC}_\varphi$ is isomorphic to
$\mathcal{O}_\gamma (K)$.
\end{proof}

\section{Examples} \label{sec:Ex}

We give some examples for C$^*$-algebras generated by a composition
operator $C_\varphi$ and multiplication operators.

\begin{ex}
A tent map $\varphi: [0,1] \to [0,1]$ is defined by
\[
  \varphi (x) = \begin{cases}
                 2x & 0 \leq x \leq \frac{1}{2}, \\
                 -2x + 2 & \frac{1}{2} \leq x \leq 1.
                \end{cases}
\]
Let
\[
   \gamma_1 (y) = \frac{1}{2} y \quad \text{and} \quad
   \gamma_2 (y) = - \frac{1}{2} y + 1.
\]
Then $\gamma_1$ and $\gamma_2$ are inverse branches of $\varphi$ and
$K = [0,1]$ is the self-similar set with respect to
$\gamma = (\gamma_1, \gamma_2)$. 
The Hutchinson measure $\mu^H$ on $[0,1]$ coincides with
the Lebesgue measure $m$ on $[0,1]$. 
The system $\gamma$ satisfies
the open set condition, the finite branch condition and the measure
separation condition in $K$.
We consider the composition operator $C_\varphi$ on
$L^2 ([0,1], \mathcal{B}([0, 1]), m)$. By Theorem \ref{thm:main},
the C$^*$-algebra $\mathcal{MC}_\varphi$ is isomorphic to
$\mathcal{O}_\gamma ([0,1])$. Moreover 
$\mathcal{O}_\gamma ([0,1])$ is isomorphic to
the Cuntz algebra $\mathcal{O}_\infty$ by \cite[Example 4.5]{KW2}.
Thus $\mathcal{MC}_\varphi$  is isomorphic to $\mathcal{O}_\infty$.
\end{ex}

\begin{rem}
Let $\gamma = (\gamma_1, \dots, \gamma_n)$ be a system of proper contractions on $K$. Assume that $K$ is self-similar and
the system $\gamma = (\gamma_1, \dots, \gamma_n)$ satisfies the
measure separation condition in $K$.
Then $\mathcal{MC}_\varphi \subset
\mathcal{MC}_{\gamma_1, \gamma_2, \dots, \gamma_n}$.
Although $\mathcal{MC}_\varphi$ is not equal to $\mathcal{MC}_{\gamma_1, \gamma_2, \dots, \gamma_n}$ in general.
In the above example, we can see that
$\mathcal{MC}_{\gamma_1, \gamma_2}$ is isomorphic to the Cuntz algebra
$\mathcal{O}_2$ by \cite{H3}.
Thus $\mathcal{MC}_\varphi$ is not equal
to $\mathcal{MC}_{\gamma_1, \gamma_2}$.
\end{rem}

\begin{ex}
Let $K = \{1, \dots , n \}^{\mathbb{N}}$. The space $K$ is the space
of one-sided sequences $w = \{ w_i \}_{i=1} ^\infty$ of
$\{ 1, \dots, n \}$. Let $\varphi: K \to K$ be the shift
\[
   \varphi( w_1, w_2, \dots ) = ( w_2, w_3, \dots ).
\]
Then inverse branches of $\varphi$ are $\gamma_1, \dots, \gamma_n$ such
that
\[
   \gamma_i ( w_1, w_2, \dots ) = ( i, w_1, w_2, \dots )
\] 
for $i$. We define a metric $d$ on $K$ by
\[
   d(w, v) = \sum_{i = 1} ^\infty \frac{1}{2^i} \, (1 - \delta_{w_i, v_i})
\]
for $w = \{ w_i \}_{i=1} ^\infty, \, v = \{ v_i \}_{i=1} ^\infty \in K$.
Then $\gamma_i, \dots, \gamma_n$ are proper contractions with
the Lipschitz constant $\frac{1}{2}$ and $K$ is the self-similar set
with respect to $\gamma = (\gamma_1, \dots, \gamma_n)$.
The Hutchinson measure $\mu^H$ on $K$ coincides with
the product measure of the discrete probability measure $\nu$ on
$\{1, \dots , n \}$ such that $\nu(\{ i \}) = \frac{1}{n}$ for
$i = 1, \dots, n$. The system $\gamma$ satisfies
the open set condition, the finite branch condition and the measure
separation condition in $K$.
We consider the composition operator $C_\varphi$ on
$L^2 (K, \mathcal{B}(K), \mu^H)$. By Theorem \ref{thm:main},
the C$^*$-algebra $\mathcal{MC}_\varphi$ is isomorphic to
$\mathcal{O}_\gamma (K)$. Moreover 
$\mathcal{O}_\gamma (K)$ is isomorphic to
the Cuntz algebra $\mathcal{O}_n$ by \cite[Example 4.2]{KW2}.
Thus $\mathcal{MC}_\varphi$  is isomorphic to $\mathcal{O}_n$.

\end{ex}

\begin{ack}
The author wishes to express his thanks to Professor Yasuo Watatani and
Professor Tsuyoshi Kajiwara for suggesting the problem and for
many stimulating conversations.
\end{ack}


\begin{thebibliography}{99}

\bibitem{EKM}
M. Elekes, T. Keleti and A. M\'{a}th\'{e},
{\it Self-similar and self-affine sets: measure of the intersection of two
copies},
Ergodic Theory Dynam. Systems {\bf 30} (2010), 399--440. 

\bibitem{F}
K. J. Falconer,
{\it Fractal Geometry},
Wiley, Chichester, 1997.

\bibitem{H1}
H. Hamada,
{\it Quotient algebras of Toeplitz-composition C$^*$-algebras
for finite Blaschke products}, Complex Anal. Oper. Theory {\bf 8} (2014), 843--862.

\bibitem{H2}
H. Hamada,
{\it C$^*$-algebras generated by multiplication operators and
composition operators with rational functions},
J. Operator Theory {\bf 75} (2016), 289--298.

\bibitem{H3}
H. Hamada,
{\it C$^*$-algebras generated by multiplication operators and
composition operators with self-similar maps},
in preparation.

\bibitem{HW}
H. Hamada and Y. Watatani,
{\it Toeplitz-composition C$^*$-algebras for certain finite Blaschke products},
Proc. Amer. Math. Soc. {\bf 138} (2010), 2113--2123.

\bibitem{Hu}
J. Huchinson,
{\it Fractals and self-similarity},
Indiana Univ. Math. J. {\bf 30} (1981), 713--747.

\bibitem{J1}
M. T. Jury,
{\it The Fredholm index for elements of Toeplitz-composition C$^*$-algebras},
Integral Equations Operator Theory {\bf 58} (2007), 341--362.

\bibitem{J2}
M. T. Jury,
{\it C$^*$-algebras generated by groups of composition operators},
Indiana Univ. Math. J. {\bf 56} (2007), 3171--3192.

\bibitem{Kaj}
T, Kajiwara,
{\it Countable bases for Hilbert C$^*$-modules
and classification of KMS states},
Operator structures and dynamical systems, 73--91, Contemp. Math., {\bf 503},
Amer. Math. Soc., Providence, RI, 2009.

\bibitem{KPW}
T. Kajiwara, C. Pinzari and Y. Watatani,
{\it Ideal structure and simplicity of the C$^*$-algebras generated
by Hilbert bimodules},
J. Funct. Anal. {\bf 159} (1998), 295--322.

\bibitem{KW1}
T. Kajiwara and Y. Watatani,
{\it C$^*$-algebras associated with complex dynamical systems},
Indiana Math. J. {\bf 54} (2005), 755--778.

\bibitem{KW2}
T. Kajiwara and Y. Watatani,
{\it C$^*$-algebras associated with self-similar sets},
J. Operator Theory {\bf 56} (2006), 225--247.

\bibitem{K}
T. Katsura,
{\it On C$^*$-algebras associated with C$^*$-correspondences},
J. Funct. Anal. {\bf 217} (2004), 366--401.

\bibitem{Ki}
J. Kigami,
{\it Analysis on Fractals},
Cambridge University Press, Cambridge, 2001.

\bibitem{KMM1}
T. L. Kriete, B. D. MacCluer and J. L. Moorhouse,
{\it Toeplitz-composition C$^*$-algebras},
J. Operator Theory {\bf 58} (2007), 135--156.

\bibitem{KMM3}
T. L. Kriete, B. D. MacCluer and J. L. Moorhouse,
{\it Spectral theory for algebraic combinations of Toeplitz
and composition operator},
J. Funct. Anal. {\bf 257} (2009), 2378--2409.

\bibitem{KMM2}
T. L. Kriete, B. D. MacCluer and J. L. Moorhouse,
{\it Composition operators within singly generated composition C$^*$-algebras},
Israel J. Math. {\bf 179} (2010), 449--477.

\bibitem{M}
K. Matsumoto,
{\it C$^*$-algbras associated with cellular automata},
Math. Scand. {\bf 75} (1994), 195--216.

\bibitem{Pa}
E. Park,
{\it Toeplitz algebras and extensions of irrational rotation algebras},
Canad. Math. Bull. {\bf 48} (2005), 607--613.

\bibitem{Pi}
M. V. Pimsner,
{\it A class of C$^*$-algebras generating both Cuntz-Krieger algebras and
crossed product by $\mathbb{Z}$ },
Free Probability Theory, Fields Inst. Commun., Vol 12, Amer. Math.
Soc., Providence, RI, pp. 189--212.

\bibitem{Q}
K. S. Quertermous,
{\it A semigroup composition C$^*$-algebra},
J. Operator Theory {\bf 67} (2012), 581--604.

\bibitem{Q2}
K. S. Quertermous,
{\it Fixed point composition and Toeplitz-composition C$^*$-algebras},
J. Funct. Anal. {\bf 265} (2013), 743--764.

\bibitem{SA}
M. K. Sarvestani and M. Amini,
{\it The C$^*$-algebra generated by irreducible
Toeplitz and composition operators},
Rocky Mountain J. Math.@{\bf 47} (2017), 1301--1316.

\bibitem{Sc}
A. Schief,
{\it Separation properties for self-similar sets},
Proc. Amer. Math. Soc. {\bf 122} (1994), 111--115.


\end{thebibliography}
\end{document}